\newtheorem{thm}{Theorem}[section]
\newtheorem{lem}[thm]{Lemma}
\theoremstyle{definition}
\newtheorem{defn}[thm]{Definition}
\theoremstyle{remark}
\newtheorem{rem}[thm]{Remark}
\numberwithin{equation}{section}
\theoremstyle{plain}
\begin{document}
	\title{On Isometric Embedding $\ell_p^m\to S_\infty$ and Unique operator space structure}

	{\let\thefootnote\relax\footnote{2010 Mathematics Subject Classification: 46B04, 46L61, 46L52, 47L25, 47L05, 46L07, 57N35}}
	\keywords{Schatten-$p$ class, Isometric embedding, Birkhoff-James orthogonality, Norm parallelism, Operator space}
	%
	\author{Samya Kumar Ray}
	
	
	\address{Samya Kumar Ray: School of Mathematics and Statistics,Wuhan University, Wuhan-430072, China}
	\email{samyaray7777@gmail.com}
	
	%

	\pagestyle{headings}
\begin{abstract}
We study existence of linear isometric embedding of $\ell_p^m$ into $S_\infty,$ for $1\leq p< \infty$ and unique operator space structure on two dimensional Banach spaces. For $p\in(2,\infty)\cup\{1\},$ we show that indeed $\ell_p^2$ does not embed isometrically into $S_\infty$. This verifies a guess of Pisier and broadly generalizes the main result of \cite{GUR18}. We also show that $S_1^m$ does not embed isometrically into $S_p^n$ for all $1<p<\infty$ and $m\geq 2$. As a consequence, we establish noncommutative analogue of some of the results in \cite{LYS04}. We also show that $(\mathbb{C}^2,\|.\|_{B_{p,q}})$ does not embed isometrically into $S_\infty$ for $2<p,q<\infty.$ The main ingredients in our proofs are notions of Birkhoff-James orthogonality and norm parallelism for operators on Hilbert spaces. These enable us to deploy `infinite descent' type of arguments to obtain contradictions. Our approach is new even in the commutative case. We prove that $(\mathbb{C}^2,\|.\|_{B_{p,q}})$ does not have unique operator space structure whenever $(p,q)\in(1,\infty)\times[1,\infty)\cup[1,\infty)\times(1,\infty)$ by showing that they do not have Property P or two summing property. In view of \cite{MIPV19}, this produces genuinely new examples of two dimensional Banach spaces without unique operator space structure, providing a partial answer to a question of Paulsen. In this case, we derive our result by transferring the problem to real case and applying known results of \cite{ARFJS95}.
	\end{abstract}
	
	\maketitle
	\section{Introduction and main results}
	In this article, we investigate existence of linear isometric embedding of $\ell_p^m$ into $S_\infty$, the set of all compact operators on $\ell_2,$ for $1\leq p<\infty,$ and find new examples of two dimensional Banach spaces $(\mathbb{C}^2,\|.\|_{B_{p,q}})$ failing to have unique operator space structure for $(p,q)\in(1,\infty)\times[1,\infty)\cup[1,\infty)\times(1,\infty),$ where $B_{p,q}:=\{(z_1,z_2):|z_1|^p+|z_2|^q<1\}$ is an open subset of $\mathbb C^2$ which is indeed an open unit ball with respect to some norm on $\mathbb C^2,$ denoted by $\|.\|_{B_{p,q}}.$ Our motivation behind investigating this problem is twofold. In the field of functional analysis, one of the important topics is isometric theory of Banach spaces. This line of research actually starts with the pioneering work of Banach himself \cite{BA32}, where he characterized linear isometries of $\ell_p$ and $L_p$ spaces for $1\leq p<\infty.$ This work was further taken up by Lamperti \cite{LA58}. We mention \cite{FLJ03}, \cite{FLJ07} and references therein for a comprehensive study of isometries on Banach spaces. The isomorphic theory of Banach spaces is also an extremely important topic and in last fifty years lot of tools have been developed for this purpose. We refer \cite{JOL01} and \cite{PI86} for many topics around this. However, isometries between $\ell_p$ and $\ell_q$ for $1\leq p\neq q\leq \infty,$ was first studied in \cite{LYV93}. In the article \cite{LYS04}, the authors proved a remarkable result, which is, for $2\leq m\leq n,$ if there exists a linear isometry from $\ell_p^m$ to $\ell_q^n,$ where $1\leq p\neq q\leq \infty,$ then we must have $p=2$ and $q$ an even integer. This topic is also closely related to Warning's problem, cubature formulae and spherical design. We refer \cite{LY08}, \cite{LY09}, \cite{LYS01}, \cite{LYV93} and \cite{LYS04} for more information in this direction.

Motivated by mathematical physics and quantum mechanics, noncommutative mathematics have been developed rapidly in past few decades. Schatten-$p$ class is one of the simplest noncommutative $L_p$-spaces and they can be thought of noncommutative analogue of $\ell_p$-spaces. Isometries on Schatten-$p$ class was first studied by Arazy \cite{AR75}. Isometries on noncommutative $L_p$-spaces associated with semifinite von Neumann algebra was studied in \cite{YE81}. Isometries on noncommutative $L_p$-spaces for general von Neumann algebras and also complete isometries have been studied by \cite{JURS05}. Therefore, it is tempting to examine existence of isometric embedding of Schatten-$p$ class into Schatten-$q$ class for $1\leq p\neq q\leq\infty$ and investigate a noncommutative analogue of work done in \cite{LYS04}.

	Our second motivation stems from operator space theory and its connection with boundary-normal dilation. Suppose $\mathcal{D}\subseteq\mathbb{C}^m$ is a bounded domain which is an open unit ball with respect to some norm and $(M_{n},\|.\|_{op})$ denotes the Banach space of $n\times n$ complex matrices endowed with the usual operator norm. For any $w\in\mathcal{D},$ and $m$-tuple of matrices $\textbf{T}:=(T_1,...,T_m),$ $T_i\in M_n$, for $1\leq i\leq m,$ define $\langle\Delta f(w),\textbf{T}\rangle:=\:\sum_{j=1}^m\frac{\partial f}{\partial z_j}(w)T_j,$ where $ f\in\mathcal{O}(\overline{\mathcal{D}})$, and $\mathcal{O}(\overline{\mathcal{D}})$ denotes closure of all polynomials on $\overline{\mathcal{D}}$ with respect to supremum norm. The map defined as
	\begin{equation}\label{PARHOM} 
	\Phi_{(w,\textbf{A})}(f):=
	\begin{pmatrix}
	f(w)I_n & \langle\Delta f(w),\textbf{T}\rangle\\
	0 & f(w)I_n\\
	\end{pmatrix},
	\end{equation} for all $f$ in $\mathcal{O}(\overline{\mathcal{D}}),$ is clearly a continuous unital algebra homomorphism from $(\mathcal{O}(\overline{\mathcal{D}}),\| \cdot \|_{\infty})$ to $(M_{2n}, \| \cdot \|_{op})$. We call such a homomorphism a {\it Parrot Like Homomorphism}. The origin of this kind of homomorphism is from \cite{PA70} and then further studied in \cite{MI90}--\cite{MIS190}. It is an intriguing open problem (see \cite[Chapter 7., Chapter 14. Page 190]{PA02}, \cite[Chapter 4, Page 93]{PI13}) to determine domains for which there exists a contractive homomorphism  which is not completely contractive. The issue is related to existence of Arveson's notion of boundary normal dilation (see \cite{PA02}). It follows that for $\mathcal D\subseteq \mathbb C^m$ every contractive Parrot like homomorphism is completely contractive if and only if the associated Banach space $(\mathbb C^m,\|.\|_{\mathcal D})$ has unique operator space structure. Paulsen \cite{PA92} used deep facts of Banach space geometry to show that any Banach space with dimension $\geq 5$, can be endowed with different operator space structures. The same is true due to E. Ricard \cite[Chapter 3, Page 79]{PI03} for dimension $\geq 3.$ However, for unit balls of two dimensional complex Banach spaces, the issue is not completely resolved. Till now $\ell_1^2$ and $\ell_\infty^2$ are the only known Banach spaces with unique operator space structure. We refer \cite[Chapter 7, Chapter 14.]{PA02}, \cite[Chapter 4.]{PI13} and \cite[Capter 3.]{PI03} for a detailed discussion on these topics. Recently, in \cite{MIPV19}, the authors have answered this question for two dimensional Banach spaces which embeds isometrically into $S_\infty^2,$ by intrinsically showing that this class of Banach spaces do not have Property P unless it is isometric to $\ell_\infty^2.$ Property P serves as a good testing condition for unique operator space structure and is known to be equivalent to two summing property (see \cite{ARFJS95}, \cite{BAM95}), which was originally observed by Pisier. It is known that $\ell_p^2$ does not have Property P for $1<p<\infty$ (see \cite{BAM95}). The methods from \cite{MIPV19} prompted the question that which finite dimensional Banach spaces can be embedded isometrically into $S_\infty^n$ for some $n\geq 1$. It was indeed shown in \cite{GUR18} that $\ell_1^m$ does not embed isometrically into $S_\infty^n$ for any $n\geq m>1.$ After this Pisier guessed \cite[Remark 2.5]{GUR18} that it is probably true that there is no isometric embedding of $\ell_1^m$ into compact operators on $\ell_2$ for $m>1.$ More generally, G. Misra asked whether for $m>1,$ $\ell_p^m$ embeds isometrically into $S_\infty,$ for $1\leq p\neq 2<\infty$ ? However, the methods of \cite{GUR18} fail to answer these questions. Note that if there is no isometric embedding of $\ell_p^m$ into compact operators, then we immediately have $S_p^m$ does not embed isometrically into $S_\infty$ for $1\leq p<\infty.$ This is how our motivations coincide.

One of our main theorems is the following which gives a completely new proof of the main result in \cite{GUR18} and extends it to further generality, verifying the guess of Pisier to be correct.
	\begin{thm}\label{noc}
		There is no linear isometry from $\ell_1^m$ to $S_\infty$ for $m\geq 2.$
	\end{thm}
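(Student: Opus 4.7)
The plan is to reduce to the case $m=2$ and then derive a contradiction from compactness of a single auxiliary operator. Any isometric embedding $T\colon\ell_1^m\to S_\infty$ restricts, via the coordinate inclusion $\ell_1^2\hookrightarrow\ell_1^m$, to an isometric embedding $\ell_1^2\to S_\infty$, so it suffices to rule out the case $m=2$. I would thus assume, for contradiction, that $T\colon\ell_1^2\to S_\infty$ is an isometry, set $A_j:=T(e_j)$ for $j=1,2$ (so that $\|A_j\|=1$), and note that the isometric identity $\|aA_1+bA_2\|=|a|+|b|$ specialises, for every $\omega$ on the unit circle, to $\|A_1+\omega A_2\|=2$.

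The key step is to exploit compactness together with the rigidity of the equality case of the triangle inequality on a Hilbert space. Since $A_1+\omega A_2\in S_\infty$, its operator norm is attained: there is a unit vector $x_\omega\in\ell_2$ with $\|(A_1+\omega A_2)x_\omega\|=2$. The chain
$$2=\|(A_1+\omega A_2)x_\omega\|\leq\|A_1 x_\omega\|+\|A_2 x_\omega\|\leq\|A_1\|+\|A_2\|=2$$
is then an equality throughout, which forces $\|A_1 x_\omega\|=\|A_2 x_\omega\|=1$ together with the alignment relation $A_1 x_\omega=\omega A_2 x_\omega$ (two unit vectors in $\ell_2$ that achieve equality in the triangle inequality must be positive scalar multiples of each other, and the unit norms pin down the scalar). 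Moreover, the norm attainment $\|A_2 x_\omega\|=\|A_2\|$ for a unit $x_\omega$ amounts to $\langle A_2^*A_2 x_\omega,x_\omega\rangle=\|A_2^*A_2\|$; positivity of $A_2^*A_2$ upgrades this to the eigenvector identity $A_2^*A_2\,x_\omega=x_\omega$.

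Applying $A_2^*$ to the alignment relation and using this eigenvector identity then gives
$$A_2^*A_1\,x_\omega=\omega\,A_2^*A_2\,x_\omega=\omega\,x_\omega,$$
so every $\omega$ on the unit circle occurs as a nonzero eigenvalue of $A_2^*A_1$. Since $A_2^*A_1$ is compact (the composition of a compact with a bounded operator), its nonzero spectrum is at most countable and can only accumulate at $0$; in particular only finitely many of its eigenvalues can lie on the unit circle, contradicting the uncountability of such $\omega$. The main obstacle in this plan is the passage from the norm attainment $\|A_2 x_\omega\|=\|A_2\|$ to the eigenvector relation $A_2^*A_2 x_\omega=x_\omega$, which needs positivity of $A_2^*A_2$ together with the sharp Cauchy--Schwarz case; once it is in place, compactness of $A_2^*A_1$ closes the argument in a single line, so the whole theorem collapses to one short contradiction rather than a long descent.
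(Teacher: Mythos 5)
Your proof is correct, and it takes a genuinely different and considerably shorter route than the paper's. The paper reduces to self-adjoint operators, diagonalizes $T$, invokes the norm-parallelism criterion of Theorem~\ref{para} to manufacture a common unit eigenvector of the two operators, and then runs an infinite descent (unitary conjugation, deletion of the first row and column, and a density argument showing the truncated pair still represents $\ell_1^2$ isometrically), terminating after finitely many steps in a contradiction with Part $(i)$ of Lemma~\ref{agag}. You instead fix, for each $\omega\in\mathbb T$, a norming unit vector $x_\omega$ of the compact operator $A_1+\omega A_2$ and use the equality cases of the triangle and Cauchy--Schwarz inequalities --- in effect a self-contained proof of the one instance of Theorem~\ref{para} that the paper needs --- to get $A_1x_\omega=\omega A_2x_\omega$ and $A_2^*A_2x_\omega=x_\omega$, hence $A_2^*A_1x_\omega=\omega x_\omega$. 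The Riesz theory of the compact operator $A_2^*A_1$ (nonzero eigenvalues form an at most countable set accumulating only at $0$) then collides with the uncountability of $\mathbb T$, and this single cardinality clash replaces the entire descent. All the individual steps are sound: compact operators attain their norm; $\|u+v\|=\|u\|+\|v\|$ with $u,v\neq0$ in a Hilbert space forces $u=cv$ with $c>0$; and $\langle Px_\omega,x_\omega\rangle=\|P\|$ for positive $P$ and unit $x_\omega$ forces $Px_\omega=\|P\|x_\omega$ via $(\|P\|I-P)^{1/2}x_\omega=0$. What the paper's longer argument buys is reusability: the same descent skeleton carries over to Theorem~\ref{forp} for $2<p<\infty$, where there is no parallelism relation $T||S$ and no analogue of your circle of eigenvalues. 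Your argument, by contrast, pinpoints exactly where compactness enters: for the non-compact isometric embedding $(z_1,z_2)\mapsto z_1I+z_2M_{e^{i\theta}}$ of $\ell_1^2$ into $B(L^2(\mathbb T))$, the operator $A_2^*A_1$ really does carry all of $\mathbb T$ in its spectrum, just not as point spectrum of a compact operator.
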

	We also provide an answer to the question of G. Misra for the range $2<p<\infty.$
	\begin{thm}\label{forp}
		Let $2<p<\infty.$ Let $\Phi:\ell_p^2\to B(\ell_2)$ defined by $\Phi((z_1,z_2)):=z_1T+z_2S$ is a linear isometry. Then, we must have $T,S\in B(\ell_2)\setminus S_\infty.$
	\end{thm}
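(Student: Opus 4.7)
The plan is to argue by contradiction. Suppose $T\in S_\infty$; the case $S\in S_\infty$ follows by the symmetry $(z_1,z_2)\mapsto(z_2,z_1)$. Since $\Phi$ is an isometry, $\|T\|=\|S\|=1$ and $\|z_1T+z_2S\|=(|z_1|^p+|z_2|^p)^{1/p}$ for every $(z_1,z_2)\in\C^2$. Compactness of $T$ with $\|T\|=1$ makes $E:=\{x\in\ell_2:\|Tx\|=\|x\|\}$ the finite-dimensional nonzero eigenspace of $|T|$ corresponding to the eigenvalue $1$, and the accumulation of the singular values of $T$ only at $0$ supplies a strict spectral gap $\mu:=\|T|_{E^\perp}\|<1$.

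The crucial step is to show that $S$ vanishes on $E$. For a unit vector $x\in E$ and $z\in\C$, we have
\[
\|(T+zS)x\|^2=1+2\,\mathrm{Re}(z\langle Sx,Tx\rangle)+|z|^2\|Sx\|^2\le\|T+zS\|^2=(1+|z|^p)^{2/p}=1+\tfrac{2}{p}|z|^p+O(|z|^{2p}).
\]
Writing $z=te^{i\theta}$ and letting $t\downarrow 0$, varying $\theta$ at the $O(t)$ level forces $\langle Sx,Tx\rangle=0$, and the remaining $O(t^2)$ inequality becomes $\|Sx\|^2\le\tfrac{2}{p}t^{p-2}+O(t^{2p-2})$, which collapses to $Sx=0$ precisely because $p-2>0$. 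This is the only place where the hypothesis $p>2$ is used. Applying the same argument to the $\ell_p^2$-isometry $(z_1,z_2)\mapsto z_1T^*+z_2S^*$, whose compact entry $T^*$ has peak subspace $T(E)$, we obtain $S^*|_{T(E)}=0$; dualizing, $\mathrm{ran}(S)\subseteq T(E)^\perp$.

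Combining $S|_E=0$ with $\mathrm{ran}(S)\subseteq T(E)^\perp$ yields the block decomposition
\[
T=\begin{pmatrix}U & 0\\0 & T_0\end{pmatrix},\qquad S=\begin{pmatrix}0 & 0\\0 & S_0\end{pmatrix}
\]
with respect to the orthogonal splittings $\ell_2=E\oplus E^\perp$ in the domain and $\ell_2=T(E)\oplus T(E)^\perp$ in the codomain, where $U=T|_E:E\to T(E)$ is a Hilbert-space unitary (so $\|U\|=1$) and $\|T_0\|=\mu<1$. Since $T+zS$ is now block-diagonal, $\|T+zS\|=\max(1,\|T_0+zS_0\|)$. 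For $z\ne 0$ small enough that $\|T_0+zS_0\|\le\mu+|z|\|S_0\|<1$, this gives $\|T+zS\|=1$, contradicting $\|T+zS\|=(1+|z|^p)^{1/p}>1$. The main technical obstacle in this plan is the vanishing step $S|_E=0$: the bound $\|Sx\|^2\le\tfrac{2}{p}t^{p-2}$ is only useful as $t\downarrow 0$ when $p>2$, the regime in which $t\mapsto(1+t^p)^{1/p}$ is flatter than quadratic at the origin; everything beyond this is soft Hilbert-space geometry built from orthogonal SVD considerations.
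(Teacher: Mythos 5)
Your proposal is correct, and it takes a genuinely different route from the paper. The paper first normalizes $T$ to a positive semi-definite diagonal operator by unitary conjugation, invokes the Birkhoff--James orthogonality characterization (Theorem~\ref{ort}) to produce a single $\zeta\in\mathbb M_T$ with $\langle T\zeta,S\zeta\rangle=0$, rules out $\|S\zeta\|>0$ by a large-$t$ asymptotic (Lemma~\ref{anesti}, letting $t\to\infty$), and then, in the case $S\zeta=0$, conjugates so that $\zeta=e_1$, passes to adjoints, deletes the first row and column, and iterates this ``infinite descent'' until the unit eigenvalues of $T$ are exhausted, contradicting $\|\widetilde T\|_\infty=1$. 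You instead work locally at $z=0$: the second-order Taylor expansion of $(1+|z|^p)^{2/p}$ against $\|(T+zS)x\|^2$ extracts the orthogonality $\langle Sx,Tx\rangle=0$ from the first-order term (so you never need Theorem~\ref{ort}) and then kills $Sx$ outright from the second-order term using $p>2$ --- and it does so for \emph{every} unit vector in the peak space $E=\ker(I-|T|)$ at once, not just one. A single dualization to $(T^*,S^*)$, whose peak space is $T(E)$, plus the spectral gap $\|T|_{E^\perp}\|<1$ coming from compactness, then yields the block decomposition and the contradiction $\|T+zS\|=1$ for small $z\neq 0$ in one stroke, with no iteration and no density/continuity argument. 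Your small-$t$ expansion is the local counterpart of the paper's Lemma~\ref{anesti} applied as $t\to\infty$; what your version buys is a shorter, more self-contained proof whose hypotheses on the norm (flatter than quadratic at the axes, symmetric under conjugation) are exactly the ones isolated in the paper's remark following this theorem, so the generalization to $(\mathbb C^2,\|\cdot\|_{B_{p,q}})$ is immediate. What the paper's approach buys is structural uniformity with the proof of Theorem~\ref{noc} for $p=1$, where the descent seems genuinely needed.
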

Note that the above theorem readily implies that there is no isometric embedding of $S_p^m$ into $S_\infty^n$ for $m\geq 2$ and $2<p<\infty.$
We also prove the following theorem.
\begin{thm}\label{ncprecha}
Let $1<p<\infty.$ There is no isometric embedding of $S_1^m$ into $S_p$ for $m\geq 2.$
\end{thm}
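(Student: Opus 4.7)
My plan is to derive a contradiction from the strict convexity of $S_p$ for $1<p<\infty$ by exhibiting two linearly independent, norm-parallel elements in $S_1^m$ whose images under any isometric embedding could not remain so in $S_p$.

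I would first reduce to $m=2$ by restricting any putative isometry $\Phi: S_1^m \to S_p$ to the two-dimensional subspace $\text{span}\{E_{11}, E_{22}\}$, where $E_{ij}$ denote the standard matrix units. The key property of $S_1^2$ is the extreme flatness of its unit ball along the diagonal: for every $\lambda, \mu \in \mathbb{C}$, the matrix $\lambda E_{11} + \mu E_{22}$ is diagonal with singular values $|\lambda|$ and $|\mu|$, so
\[
\|\lambda E_{11} + \mu E_{22}\|_1 = |\lambda| + |\mu|.
\]
In particular $\|E_{11} + E_{22}\|_1 = 2 = \|E_{11}\|_1 + \|E_{22}\|_1$, so $E_{11}$ and $E_{22}$ are norm parallel in $S_1^2$ while being linearly independent.

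Setting $A = \Phi(E_{11})$ and $B = \Phi(E_{22})$, the isometry condition gives $\|A\|_p = \|B\|_p = 1$ and $\|A + B\|_p = 2$. Since $S_p$ is uniformly, hence strictly, convex for $1 < p < \infty$ (a consequence of the non-commutative Clarkson inequalities), any pair of unit vectors with $\|A + B\|_p = 2$ must satisfy $A = B$. The injectivity of $\Phi$ then forces $E_{11} = E_{22}$, a contradiction.

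The main, and essentially only, delicate point is the clean invocation of strict convexity of $S_p$, which rests on the classical non-commutative Clarkson inequalities. If one prefers to avoid citing strict convexity directly, a self-contained Clarkson computation works equally well: combining $\|A+B\|_p = 2$ with $\|A - B\|_p = \|E_{11} - E_{22}\|_1 = 2$ (the latter because $\operatorname{diag}(1,-1)$ again has singular values $1,1$) yields $\|A+B\|_p^p + \|A-B\|_p^p = 2^{p+1}$, violating the Clarkson bound $2^{p-1}(\|A\|_p^p + \|B\|_p^p) = 2^p$ in the range $p \geq 2$; the dual Clarkson inequality dispatches $1 < p \leq 2$. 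Either route fits snugly into the Birkhoff-James and norm-parallelism framework developed in the paper.
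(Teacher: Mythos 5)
Your proof is correct, and it follows the same skeleton as the paper's --- both arguments reduce to an isometric copy of $\ell_1^2$ sitting inside $S_1^m$ (your $\mathrm{span}\{E_{11},E_{22}\}$ is exactly this copy) and then play the flatness of the $\ell_1^2$ ball against the rotundity of $S_p$. The difference is in which result carries the load at the final step. The paper observes that the images $T=\Phi(E_{11})$, $S=\Phi(E_{22})$ are norm parallel and invokes the characterization of norm parallelism in $S_p$ for $1<p<\infty$ (Theorem~\ref{FM1}, from \cite{BOCMWZ17}): parallel elements must be linearly dependent, contradicting injectivity of the isometry. You instead use only the specific instance $\|A\|_p=\|B\|_p=1$, $\|A+B\|_p=2$ and conclude $A=B$ from strict convexity of $S_p$ (or, in your self-contained variant, derive a direct numerical contradiction from the noncommutative Clarkson inequalities using $\|A-B\|_p=2$ as well). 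Your route is more elementary in that it bypasses the full parallelism characterization in favor of the classical uniform convexity of the Schatten classes; what it gives up is the fit with the norm-parallelism framework the paper uses uniformly across Theorems~\ref{noc}--\ref{ncprecha}, where Theorem~\ref{FM1} is the natural $S_p$ counterpart of Theorem~\ref{para}. Both are complete proofs; yours is essentially an unwinding, in this special case, of the strict-convexity mechanism underlying Theorem~\ref{FM1}.
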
 
Together with \Cref{noc}, \Cref{ncprecha} and \Cref{forp}, we immediately have that $S_p^m$ does not embed isometrically into $S_q^n$ for all $(p,q)\in(2,\infty)\times\{\infty\}\cup\{1\}\times(1,\infty]$ and $m\geq 2.$ This partially establishes noncommutative analogue of the main theorem in \cite{LYS04} for the above mentioned range of $p,q.$
The main tools which we use, are the notions of Birkhoff-James orthogonality and norm parallelism. Birkhoff-James orthogonality has its origin in \cite{BI35}, \cite{JA45} and \cite{JA47}. A remarkable equivalent criterion for Birkhoff-James orthogonality was established in \cite{BHS99}. These tools enable us to obtain new isometry from existing isometry and finally use classic trick of `infinite descent' (see \cite{EN98} for many other illustrations) to obtain contradiction. Our approach is even new in the commutative case and we believe our methods can be applied to study isometries between other kind of Banach spaces also for example Orlicz spaces and symmetric operator spaces and might be helpful in simplifying existing proofs in \cite{LY08}--\cite{LYS04}.

Now, we state our main results concerning the problem on existence of unique operator space structure. In this paper, we prove:
\begin{thm}\label{auoss}
Let $1\leq p,q<\infty$ and $p$ or $q$ is strictly bigger than $1.$ Then, $(\mathbb C^2,\|.\|_{B_{p,q}})$ does not have Property P and hence unique operator space structure.
\end{thm}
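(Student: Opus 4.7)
The plan is to translate Property P for the complex two-dimensional Banach space $E := (\mathbb{C}^2, \|\cdot\|_{B_{p,q}})$ into a question about a real two-dimensional Banach space, and then invoke the classification of real 2D Banach spaces with the two-summing property from \cite{ARFJS95}. Since Property P is equivalent, by Pisier's observation recalled in the introduction, to the two-summing property, it is enough to show $E$ fails the latter.

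First I would identify the natural totally real 2-dimensional subspace $V \subset E$ spanned by the standard basis vectors $e_1, e_2$ over $\mathbb{R}$. Its induced norm makes $V$ isometrically isomorphic to the real Banach space whose unit ball is $\{(x_1, x_2) \in \mathbb{R}^2 : |x_1|^p + |x_2|^q \leq 1\}$; call this space $F_{p,q}$. Under the stated range on $(p,q)$ (with $p$ or $q$ strictly larger than $1$), the unit ball of $F_{p,q}$ is smooth at non-axial boundary points, so $F_{p,q}$ is neither a hexagonal norm nor isometric to $\ell_1^2$ or $\ell_\infty^2$.

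Next, I would exploit the bi-circular symmetry of $\|\cdot\|_{B_{p,q}}$, namely its invariance under $(z_1, z_2) \mapsto (\epsilon_1 z_1, \epsilon_2 z_2)$ whenever $|\epsilon_1| = |\epsilon_2| = 1$. Averaging the map $(z_1,z_2)\mapsto (\mathrm{Re}(z_1),\mathrm{Re}(z_2))$ over the bi-torus yields a norm-one real-linear projection $E \to V$, realizing $V$ as a real 1-complemented subspace of $E$. From this I would deduce that if $E$ had the (complex) two-summing property, then $F_{p,q}$ would inherit the (real) two-summing property: any real-linear operator from $F_{p,q}$ into a real Hilbert space extends via the projection to a complex-linear operator from $E$ into the complexification of that Hilbert space, preserving the operator norm and controlling the real 2-summing norm by the complex one.

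Finally, I would apply the classification in \cite{ARFJS95} of 2D real Banach spaces with the two-summing property to observe that $F_{p,q}$ is not on that short list for any admissible $(p,q)$, producing the required contradiction. The principal obstacle will be the reduction step: real and complex 2-summing constants do not coincide in general, so one must carefully use the bi-circular symmetry to faithfully transfer any witness of failure of the 2-summing property on $F_{p,q}$ to a corresponding obstruction inside $E$. Once this linkage is established cleanly, the appeal to \cite{ARFJS95} is formal, and the implication ``no Property P $\Rightarrow$ no unique operator space structure'' then yields the full statement.
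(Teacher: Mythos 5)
Your high-level strategy --- reduce to the real two-dimensional space with unit ball $|B_{p,q}|$ and invoke the classification of real spaces with the two-summing property from \cite{ARFJS95} --- is exactly the strategy of the paper, and your endgame (the real unit ball has more than four extreme points, hence is not isometric to $\ell_\infty^2(\mathbb R)$) is sound. The gap is in the transfer step, and it is precisely the one you flag yourself without resolving. Realizing $V=\mathrm{span}_{\mathbb R}\{e_1,e_2\}$ as a norm-one real-complemented subspace of $E=(\mathbb C^2,\|\cdot\|_{B_{p,q}})$ does not let you conclude that $F_{p,q}$ inherits the two-summing property from $E$: composing a real-linear $T:F_{p,q}\to\ell_2^2(\mathbb R)$ with your projection produces a real-linear map on the realification of $E$, not a complex-linear map on $E$, so the complex two-summing property of $E$ says nothing about it; and the complex-linear extension $T_{\mathbb C}(x+iy)=Tx+iTy$ is a priori only bounded by $\sqrt{2}\,\|T\|$. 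Since the two-summing property is an isometric condition ($\pi_2(T)=\|T\|$ exactly), any loss of a constant destroys the argument, and you do not indicate how the bi-circular symmetry repairs this.

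The paper closes exactly this gap by working not with operators into Hilbert space but with the equivalent quantity $\gamma(X)=\sup\{\langle A,B\rangle: A\geq 0,\ B\geq 0,\ \|A\|_{X\to X^*}\leq 1,\ \|B\|_{X^*\to X}\leq 1\}$. For a positive matrix $A$ one has $\|A\|_{X\to X^*}=\sup_{v\in\Omega}\langle Av,v\rangle$, and the Reinhardt symmetry of $\Omega=B_{p,q}$ lets one replace $A$ by $\mathbf A^{+}$ (absolute values in the off-diagonal entries) and restrict the supremum to vectors with nonnegative real entries, with exact equality and no constants; the same holds for $B$, and the trace pairing can only increase under $A\mapsto \mathbf A^{+}$. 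This yields $\gamma((\mathbb C^2,\|\cdot\|_{\Omega}))=\gamma((\mathbb R^2,\|\cdot\|_{|\Omega|}))$ on the nose, after which \cite{ARFJS95} applies exactly as you intend. If you want to keep your complementation framework, you would still need to prove an exact identity of this kind for the relevant quadratic forms, which is where the real content of the proof lies.
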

The above theorem provides simpler proofs of \cite[Theorem 6.2]{MIPV19} and \cite[Theorem 2.3]{BAM95}.
In view of \cite{MIPV19}, this produces new examples of two dimensional Banach spaces without unique operator space structure. Our proof mainly relies on a transfer principle of complex to real case which enables us to apply known characterization of Property P established in \cite{ARFJS95}. 

We end our introduction by mentioning the organization of rest of the paper. In Section \ref{sec1}, we describe all the necessary preliminaries and notations. In Section \ref{sec2}, we prove some elementary lemmas. In Section \ref{sec3}, we give the proofs of the main theorems as described in the introduction. 
	
\section{Preliminaries}	\label{sec1}
	\subsection{Birkhoff-James orthogonality and norm-parallelism} Unless specified we always work with complex Banach spaces. Denote $X$ to be a Banach space. Let $B(X)$ denote the set of bounded linear maps on $X.$ For $x,y\in X,$ we say that $x$ is orthogonal to $y$ in the sense of Birkhoff-James if $\|x+zy\|\geq\|x\|$ for all complex number $z\in\mathbb C.$ In this case, we write $x\perp_{BJ}y.$ 
	
	An element $x\in X$ is said to be norm parallel to another element $y\in X,$ if there exists $z\in\mathbb T:=\{z\in\mathbb C:|z|=1\},$ such that $\|x+zy\|=\|x\|+\|y\|.$ In this case, we write $x||y.$
	
	For any $T\in B(X),$ let us denote ${\mathbb M}_T:=\{x\in X:\|x\|=1,\ \|Tx\|=1\}.$

Let $1\leq p\leq\infty$. Let $\ell_p^n$ and $\ell_p$ denote the usual sequence spaces. If the scalar field is $\mathbb R$ we denote these spaces by $\ell_p^n(\mathbb R)$ and $\ell_p(\mathbb R)$ respectively.

For $1\leq p<\infty,$ one defines $S_p:=\{T\in B(\ell_2):Tr(|T|^p)<\infty\},$ where $|T|:=(T^*T)^{\frac{1}{2}}$ and $Tr$ denotes the usual trace. Endowed with the norm $\|T\|_p:=(Tr(|T|^p))^{\frac{1}{p}},$ $S_p$ becomes a Banach space. The space of compact operators on $\ell_2$ is denoted by $S_\infty,$ which is again a Banach space endowed with the norm of $B(\ell_2).$ One defines $S_p^n$ in an analogous way, by replacing $\ell_2$ in the above definitions by $\ell_2^n.$ 

We present the following theorems after paraphrasing.
\begin{thm}\cite{TU17}\label{ort}
	Let $T,S\in B(\ell_2)$ such that $T$ be compact. Then, $T\perp_{BJ}S$ if and only if there exists $\zeta\in \mathbb{M}_T$ such that $\langle T\zeta, S\zeta\rangle=0.$
\end{thm}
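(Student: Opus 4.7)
The plan is to handle the two implications separately, with compactness of $T$ providing the finite-dimensional structure needed for the nontrivial direction. The easy direction ($\Leftarrow$) is a direct computation: if $\zeta\in\mathbb{M}_T$ satisfies $\langle T\zeta, S\zeta\rangle=0$, then for every $z\in\mathbb C$,
\[
\|T+zS\|^2\ge\|(T+zS)\zeta\|^2=\|T\zeta\|^2+|z|^2\|S\zeta\|^2\ge\|T\|^2,
\]
which is exactly $T\perp_{BJ}S$.

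For the forward direction ($\Rightarrow$), I would assume without loss of generality that $\|T\|=1$. Since $T$ is compact, $T^{*}T$ is compact self-adjoint with top eigenvalue $1$, so its $1$-eigenspace $V$ is finite dimensional, and the spectral decomposition of $T^{*}T$ shows that $\mathbb{M}_T$ is precisely the unit sphere of $V$. For each $\theta\in[0,2\pi)$, set $\lambda_n:=n^{-1}e^{i\theta}$. The Birkhoff--James hypothesis gives $\|T+\lambda_n S\|\ge 1$, so I can pick unit vectors $x_n$ with $\|(T+\lambda_n S)x_n\|\ge 1-n^{-2}$, from which $\|Tx_n\|\to 1$. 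Combining compactness of $T$ with weak sequential compactness of the unit ball of $\ell_2$ and uniform convexity of the Hilbert norm, I extract a subsequence converging \emph{strongly} to some $\zeta_\theta\in\mathbb{M}_T$. Expanding $\|(T+\lambda_n S)x_n\|^2$, using $\|Tx_n\|\le 1$, and passing to the limit gives
\[
\mathrm{Re}\bigl(e^{-i\theta}\langle T\zeta_\theta,S\zeta_\theta\rangle\bigr)\ge 0.
\]

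To conclude, I would observe that $C:=\{\langle T\zeta,S\zeta\rangle:\zeta\in\mathbb{M}_T\}$ equals the numerical range of the finite-dimensional operator $B:=P_V S^{*}T|_V$, hence is convex and compact by Toeplitz--Hausdorff. If $0\notin C$, strict separation yields $\theta_0$ and $c>0$ with $\mathrm{Re}(e^{-i\theta_0}w)\ge c$ for every $w\in C$, contradicting the inequality above at $\theta=\theta_0+\pi$. Therefore $0\in C$, producing the desired $\zeta\in\mathbb{M}_T$ with $\langle T\zeta,S\zeta\rangle=0$. The main obstacle I anticipate is the compactness step that produces $\zeta_\theta$: one must package compactness of $T$, weak compactness of the unit ball, and uniform convexity of $\ell_2$ so that the strong convergence of approximate maximizers is robust enough for the linearized inequality to survive the limit. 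This is precisely the point where compactness of $T$ (and not mere boundedness) is essential; once this is in hand, the Toeplitz--Hausdorff/separation finale is routine.
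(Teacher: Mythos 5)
Your argument is correct, but note that the paper itself offers no proof of this statement: it is imported verbatim (``after paraphrasing'') from the cited reference [TU17], so there is no internal proof to compare against. What you have written is essentially the standard Bhatia--\v{S}emrl/Turn\v{s}ek argument for the compact case, and every step checks out: the backward implication is the correct one-line computation; for the forward implication, the identification of $\mathbb{M}_T$ with the unit sphere of the finite-dimensional $1$-eigenspace $V$ of $T^*T$ is valid because $T^*T$ is compact, positive and attains its norm; the extraction of $\zeta_\theta$ from approximate maximizers via weak compactness plus the Radon--Riesz property is sound (in fact strong convergence of $x_n$ is slightly more than you need, since $Tx_n\to T\zeta_\theta$ strongly and $Sx_n\rightharpoonup S\zeta_\theta$ weakly already give $\langle Tx_n,Sx_n\rangle\to\langle T\zeta_\theta,S\zeta_\theta\rangle$); and the identification of $\{\langle T\zeta,S\zeta\rangle:\zeta\in\mathbb{M}_T\}$ with the numerical range of $P_VS^*T|_V$, followed by Toeplitz--Hausdorff and separation, correctly closes the argument. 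The only cosmetic points worth flagging are the implicit normalization $\|T\|=1$ (which the paper's definition of $\mathbb{M}_T$ already presupposes) and the degenerate case $T=0$, which should be excluded or treated separately since then $\mathbb{M}_T$ is empty while $T\perp_{BJ}S$ holds trivially.
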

\begin{thm}\cite{ZA16}\label{para}
	Let $T,S\in S_\infty$. Then, $T||S$ if and only if there exists $\zeta\in \mathbb{M}_T\cap \mathbb{M}_S$ such that $|\langle T\zeta,S\zeta\rangle|=\|T\|_{\infty}\|S\|_{\infty}.$
	\end{thm}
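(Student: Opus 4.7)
My plan is to handle the two directions separately. The ``if'' direction will be an easy consequence of the equality case in the Cauchy--Schwarz inequality, while the ``only if'' direction is where the compactness hypothesis plays its essential role through the attainment of the operator norm.

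For the sufficient direction, I would start with $\xi \in \mathbb{M}_T \cap \mathbb{M}_S$ satisfying $|\langle T\xi, S\xi\rangle| = \|T\|_\infty \|S\|_\infty = \|T\xi\|\|S\xi\|$. The equality case of Cauchy--Schwarz then forces $S\xi = \alpha T\xi$ for some $\alpha \in \mathbb{C}$ (assuming $T\xi\ne 0$; the degenerate case is trivial). Choosing the unique $\zeta \in \mathbb{T}$ with $\zeta\alpha = |\alpha| \geq 0$, a direct calculation gives $\|(T + \zeta S)\xi\| = (1 + |\alpha|)\|T\xi\| = \|T\|_\infty + \|S\|_\infty$, and the triangle inequality supplies the matching upper bound, so $T \| S$.

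For the necessary direction, after replacing $S$ by $\zeta S$ (which preserves $\|S\|_\infty$, $\mathbb{M}_S$, and the quantity $|\langle T\xi, S\xi\rangle|$) I may assume $\|T + S\|_\infty = \|T\|_\infty + \|S\|_\infty$. The crucial observation is that $T + S \in S_\infty$, hence attains its operator norm: I choose a unit vector $\xi$ with $\|(T+S)\xi\| = \|T+S\|_\infty$. Then the chain
\[
\|T\|_\infty + \|S\|_\infty = \|(T+S)\xi\| \leq \|T\xi\| + \|S\xi\| \leq \|T\|_\infty + \|S\|_\infty
\]
forces every intermediate inequality to be equality, so $\xi \in \mathbb{M}_T \cap \mathbb{M}_S$ and $\|T\xi + S\xi\| = \|T\xi\| + \|S\xi\|$. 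Because equality in the triangle inequality \emph{inside a Hilbert space} forces the two vectors $T\xi$ and $S\xi$ to be non-negative real scalar multiples of each other (expand $\|T\xi + S\xi\|^2$ and match it with $(\|T\xi\|+\|S\xi\|)^2$), I conclude $\langle T\xi, S\xi\rangle = \|T\xi\|\|S\xi\| = \|T\|_\infty\|S\|_\infty$, and taking absolute values finishes the argument.

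The only genuine obstacle is producing the norming vector $\xi$; this is exactly what can break down for non-compact operators, where a norming sequence need not subconverge strongly enough to extract a single vector that simultaneously norms $T$, $S$, and $T+S$. Compactness of $T + S$ bypasses this cleanly via the standard fact that a compact operator attains its norm on some unit vector, and everything else in the proof is pure Hilbert-space geometry driven by the equality cases in Cauchy--Schwarz and the triangle inequality.
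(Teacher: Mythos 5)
The paper does not prove this statement: it is quoted (``after paraphrasing'') from Zamani \cite{ZA16}, so there is no internal proof to compare against. Your argument is correct and complete, and it is essentially the standard one: the ``if'' direction via the equality case of Cauchy--Schwarz plus the triangle inequality, and the ``only if'' direction by reducing to $\|T+S\|_\infty=\|T\|_\infty+\|S\|_\infty$, using that the compact operator $T+S$ attains its norm at a unit vector $\xi$, and then squeezing $\|T\|_\infty+\|S\|_\infty=\|(T+S)\xi\|\le\|T\xi\|+\|S\xi\|\le\|T\|_\infty+\|S\|_\infty$ to force $\xi\in\mathbb{M}_T\cap\mathbb{M}_S$ and equality in the Hilbert-space triangle inequality, hence $\langle T\xi,S\xi\rangle=\|T\|_\infty\|S\|_\infty$. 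Your closing remark correctly identifies norm attainment as the only place compactness enters. Two cosmetic points: you should read $\mathbb{M}_T$ as the norm-attainment set $\{x:\|x\|=1,\ \|Tx\|=\|T\|\}$ (the paper's literal ``$\|Tx\|=1$'' is a paraphrasing slip that only matches when $\|T\|=1$), and your unimodular scalar $\zeta$ clashes notationally with the vector $\zeta$ in the statement.
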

\begin{thm}\cite{BOCMWZ17}\label{FM1}
	Let $1<p<\infty.$ Let $T,S\in S_p.$ Then, $T||S$ if and only if $T$ and $S$ are linearly dependent. 
\end{thm}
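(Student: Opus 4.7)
The plan is to exploit the fact that $S_p$ is strictly convex for $1 < p < \infty$. Recall that strict convexity of $S_p$ is a classical consequence of the noncommutative Clarkson inequalities (essentially due to McCarthy/Dixmier), and in any strictly convex Banach space $X$, equality in the triangle inequality $\|x+y\| = \|x\|+\|y\|$ with $x,y \neq 0$ forces $y = \alpha x$ for some real $\alpha > 0$. This is the crux I would invoke.

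The easy direction is ``linearly dependent $\Rightarrow T\|S$''. If $T=0$ or $S=0$, then trivially $\|T+zS\|_p = \|T\|_p + \|S\|_p$ holds for a suitable $z\in\mathbb{T}$. Otherwise, write $S = \mu T$ with $\mu = |\mu|e^{i\theta}$ nonzero, and pick $z = e^{-i\theta}$. Then
\begin{equation*}
\|T + zS\|_p = \|(1+|\mu|)T\|_p = \|T\|_p + |\mu|\,\|T\|_p = \|T\|_p + \|S\|_p,
\end{equation*}
which witnesses $T \| S$.

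For the hard direction, suppose $T \| S$ with both operators nonzero (otherwise dependence is automatic). By definition there is $z\in\mathbb{T}$ with $\|T + zS\|_p = \|T\|_p + \|S\|_p = \|T\|_p + \|zS\|_p$. Setting $x = T/\|T\|_p$ and $y = zS/\|zS\|_p$, and rearranging the equality using the parallelogram of convex combinations, one obtains $\|x+y\|_p = 2$ with $\|x\|_p = \|y\|_p = 1$; strict convexity of $S_p$ then yields $x = y$, i.e.\ $zS/\|S\|_p = T/\|T\|_p$. Hence $zS = (\|S\|_p/\|T\|_p) T$, and $T,S$ are linearly dependent.

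The only nontrivial ingredient is the strict convexity of $S_p$ for $1 < p < \infty$, which I would simply quote rather than re-derive. The rest is a routine ``equality-in-triangle-inequality $\Rightarrow$ collinearity'' argument specialized to the parallelism relation, together with the phase-adjustment $z = e^{-i\theta}$ for the converse.
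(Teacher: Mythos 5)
The paper does not actually prove this statement: it is quoted verbatim from \cite{BOCMWZ17} and used as a black box in the proof of Theorem~\ref{ncprecha}, so there is no in-paper argument to compare against. Your proposal is correct and is the standard route: strict convexity of $S_p$ for $1<p<\infty$ (via the Clarkson--McCarthy inequalities) plus the classical fact that in a strictly convex space equality $\|x+y\|=\|x\|+\|y\|$ with $x,y\neq 0$ forces positive collinearity, applied after the phase rotation $S\mapsto zS$ absorbs the unimodular $z$ in the definition of $\|\cdot\|$-parallelism; both directions and the degenerate cases $T=0$ or $S=0$ are handled. The only cosmetic quibble is the phrase ``parallelogram of convex combinations'': the cleanest statement of the step you need is that $\bigl\|t u+(1-t)v\bigr\|=1$ for $u=T/\|T\|_p$, $v=zS/\|S\|_p$ and $t=\|T\|_p/(\|T\|_p+\|S\|_p)\in(0,1)$, which by strict convexity already gives $u=v$ without first passing through $\|u+v\|_p=2$.
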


\subsection{Property P and operator space:}	
Recall that a domain $\Omega\subseteq\mathbb C^n$ is said to be Reinhardt if $(z_1,z_2,\dots,z_n)\in\Omega$ implies $(e^{i\theta_1}z_1,e^{i\theta_2}z_2,\dots,e^{i\theta_n}z_n)\in\Omega$ for all real numbers $\theta_1,\theta_2,\dots,\theta_n.$ Given a complex Banach space $(\mathbb{C}^2,\|\cdot\|_{\Omega})$, where the unit ball $\Omega$ is Reinhardt, we define a corresponding real two dimensional Banach space $(\mathbb{R}^2,\|\cdot\|_{|\Omega|})$ with norm defined as $\|(x,y)\|_{|\Omega|}:=\|(x,y)\|_{\Omega},$ for $(x,y)\in\mathbb{R}^2$. The unit ball of $(\mathbb{R}^2,\|\cdot\|_{|\Omega|})$ is denoted by $|\Omega|$.

For any two Banach spaces $E$, $F$ and a linear map $A:E\to F$, the operator norm of $A$ is denoted by $\|A\|_{E\to F}$. Often, we use $\|A\|_{op}$ to denote the operator norm, when the underlying Banach spaces on which $A$ acts are well understood. For a complex matrix $A\geq 0$, we define $\mathbf A^{+}=\begin{pmatrix}
a_{11} & |a_{12}|\\
|a_{12}| & a_{22}\\
\end{pmatrix}$, where $A=\begin{pmatrix}
a_{11} & a_{12}\\
\overline{a_{12}} & a_{22}\\
\end{pmatrix}$. Clearly, $\mathbf A^{+}\geq 0$ if and only if $A\geq 0$. Similarly, for a real matrix $A\geq 0$, we define $\mathbf A^{+}$ as we have defined above.

Let $X$ be a finite dimensional Banach space (real or complex), we associate a numerical constant $\gamma(X)$ as the following.
\begin{equation}\label{-1}
\gamma(X):=\sup\{\langle A,B\rangle:A\geq 0,B\geq 0, \ \|A\|_{X\to X^*},\|B\|_{X^*\to X}\leq 1\},
\end{equation}
where the inner product in \eqref{-1} is the Hilbert-Schimdt inner product by representing $A$ and $B$ as matrices with respect to usual basis.
\begin{defn}[Property P]A real or complex Banach space $X$ is said to have Property P if $\gamma(X)=1$.
\end{defn}
It has been proved in \cite{BAM95} (which was originally observed by Pisier), that Property P is actually equivalent to two summing property as defined in \cite{ARFJS95}. The following remarkable characterization is from \cite{ARFJS95}.
\begin{thm}\cite{ARFJS95}\label{twoo}
Let $X$ be real Banach space with two summing property and $\text{dim}X\geq 2.$ Then, $X$ is isometrically isomorphic to $\ell_\infty^2(\mathbb R).$
\end{thm}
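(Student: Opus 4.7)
The plan is to invoke the equivalence (due to Pisier) between two summing property and Property P, and then classify real Banach spaces $X$ with $\gamma(X) = 1$. Writing $X = (\mathbb{R}^n, \|\cdot\|)$, one must show that $\operatorname{tr}(AB) \le 1$ for every pair of positive semidefinite matrices $A, B$ with $\|A\|_{X\to X^*}\le 1$ and $\|B\|_{X^*\to X}\le 1$, and identify the unique such $X$ with $\dim X \ge 2$ as $\ell_\infty^2(\mathbb{R})$.

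I would attack the problem in two parts: reduction to dimension two, and classification in dimension two. For the reduction, if $Y$ is a $2$-dimensional subspace of $X$ and $A_Y, B_Y$ are admissible positive semidefinite matrices for $Y$, one can lift them to block-diagonal positive semidefinite matrices on $X$ whose relevant operator norms and Hilbert--Schmidt inner product are unchanged, yielding $\gamma(Y) \le \gamma(X) = 1$; hence every $2$-dimensional subspace of $X$ has Property P. A rigidity statement (no $3$-dimensional real Banach space has all $2$-dimensional sections isometric to a single fixed space unless that space is a degenerate case) then forces $\dim X = 2$. For the $2$-dimensional classification, I would test $\gamma \le 1$ against a progressively richer family of matrices. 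The first natural test is the scaled identity $A = I/\|I\|_{X\to X^*}$, $B = I/\|I\|_{X^*\to X}$, giving the necessary condition $\|I\|_{X\to X^*}\|I\|_{X^*\to X} \ge 2$, which is already saturated by $\ell_\infty^2(\mathbb R)$. For unit balls such as ellipses where this identity test is degenerate, one supplements with rank-two positive semidefinite matrices whose eigendirections are aligned with pairs of extreme points of the unit ball $K$ and its polar $K^\circ$. The guiding heuristic is that equality in Property P propagates through the duality relation $f(x) = 1$ at extreme points, which together force the extreme-point structure of $K$ to organize into a parallelogram.

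The main obstacle is the step that forces the parallelogram: showing geometrically that any planar centrally symmetric convex body other than a parallelogram admits a witnessing $(A, B)$ with $\operatorname{tr}(AB) > 1$. This is a genuine convex-geometric classification, and I would expect the argument to proceed by producing a rank-two $(A,B)$ whose eigendirections lie along extreme points of $K$ and $K^\circ$ that are "non-aligned" in the sense that their pairing is not consistent with a linear parallelogram structure, exploiting the fact that a non-parallelogram body has either strictly more extreme points or a nontrivial curved piece of boundary. The converse verification, that $\ell_\infty^2(\mathbb R)$ actually satisfies $\gamma = 1$, is a short entrywise computation: for the admissible positive semidefinite $A = \begin{pmatrix} a & b \\ b & c\end{pmatrix}$, $B = \begin{pmatrix} p & q \\ q & r\end{pmatrix}$ one has $a + 2|b| + c \le 1$ (the $\ell_\infty^2 \to \ell_1^2$ norm bound) and $\max(p, r, |q|) \le 1$ (the $\ell_1^2 \to \ell_\infty^2$ norm bound), and the bound $|b|\le\sqrt{ac}$ together with Cauchy--Schwarz yields $\operatorname{tr}(AB) = ap + 2bq + cr \le 1$.
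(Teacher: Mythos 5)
The first thing to note is that the paper contains no proof of this statement: it is imported verbatim from \cite{ARFJS95} and used as a black box in the proof of Theorem 1.4. So there is no internal argument to compare against; your sketch has to be measured against the actual theorem of Arias--Figiel--Johnson--Schechtman, which is substantially harder than the outline suggests.

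Your proposal has two genuine gaps. First, the reduction to dimension $2$ via ``block-diagonal lifting'' does not work as stated for the quantity $\gamma$. If $Y\subseteq X$ is a two-dimensional subspace and $A_Y:Y\to Y^*$ is positive semidefinite with $\|A_Y\|_{Y\to Y^*}\le 1$, the zero-padded extension $A$ satisfies $\|A\|_{X\to X^*}=\sup_{x\in B_X}\langle A_YPx,Px\rangle$, where $P$ is the coordinate projection onto $Y$; since $P(B_X)$ is in general strictly larger than $B_Y$, this norm can exceed $1$, so the lifted pair need not be admissible and the inequality $\gamma(Y)\le\gamma(X)$ does not follow. (The hereditary statement is true, but it is proved in the $2$-summing formulation, where the property passes to quotients essentially for free and to subspaces by self-duality; only the lift $B=\iota B_Y\iota^*$ of the map $B_Y:Y^*\to Y$ behaves well under your construction, because $\iota^*(B_{X^*})=B_{Y^*}$ by Hahn--Banach.) Second, and more seriously, the two steps you yourself identify as the crux --- the rigidity statement excluding $\dim X\ge 3$, and the planar classification forcing the unit ball to be a parallelogram --- are precisely the content of the theorem, and for these you offer only heuristics (``non-aligned extreme points'', ``equality propagates through duality'') rather than arguments. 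Neither is routine: the two-dimensional classification in \cite{ARFJS95} is a genuinely delicate convex-geometric and trace-duality analysis, not a finite check against a list of test matrices. The only portion of your sketch that is a complete argument is the converse computation that $\gamma(\ell_\infty^2(\mathbb R))=1$ via $a+2|b|+c\le 1$, $\max(p,r)\le 1$ and $|q|\le\sqrt{pr}$, which is correct. As written, the proposal is a plan for a proof, not a proof.
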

	
An abstract operator space is a norm linear space $X,$ which is endowed with a matricial norm structure $(M_n(X),\|.\|_n)$ for all $n\geq 1,$ satisfying the following conditions.
\begin{itemize}
\item $\|A\oplus B\|_{m+n}=\text{max}\{\|A\|_m,\|B\|_n\},$ for all $A\in M_n(X)$ and $B\in M_m(X).$
\item $\|ACB\|_{n}\leq \|A\|_{\ell_2^m\to\ell_2^n}\|C\|_{m}\|B\|_{\ell_2^n\to\ell_2^m},$ for all $A\in M_{n,m},$ $B\in M_{m,n}$ and $C\in M_m(X).$ 
\end{itemize}Given any Banach space $X,$ we have two natural operator space structures on $X$ known as $MIN$ and $MAX$ structures (see \cite[Chapter 3.]{PI03}). The Banach space $X$ has a unique operator space structure iff $MIN=MAX$ complete isometrically. Paulsen \cite{PA92} introduced the following numerical quantity, \[\alpha(X):=\sup\Big\{\Big\|\sum_{i=1}^n A_i\otimes B_i\Big\|_{\ell_2\to\ell_2}\Big\}\] where the supremum is taken over all  $(A_1,\dots,A_n)\in B(\ell_2)$ and $(B_1,\dots,B_n)\in B(\ell_2)$ such that $\Big\|\sum_{i=1}^n\lambda_iA_i\Big\|_{\ell_2\to\ell_2}\leq 1,\Big\|\sum_{i=1}^n\mu_iB_i\Big\|_{_{\ell_2\to\ell_2}}\leq 1,$ for all $\|(\lambda_1,\dots,\lambda_n)\|_X\leq 1,$ $\|(\mu_1,\dots,\mu_n)\|_{X^*}\leq 1$ and $\text{dim}X=n.$ It is implicit in \cite{BAM95} that $\alpha(X)\geq \sqrt{\gamma(X)},$ which can be indeed proved easily by running the supremum as above, over matrices of which only first row is nonzero. Also, $MIN=MAX$ completely isometrically if and only if $\alpha(X)=1$ (see \cite{PA92}). Therefore, we have $\alpha(X)>1$ whenever $\gamma(X)>1.$
\section{A few elementary lemmas}\label{sec2}
	 We prove few elementary lemmas, the first of which connects the problem with Birkhoff-James orthogonality and norm parallelism.
	\begin{lem}\label{agag}
Let $1\leq p<\infty.$ Let $\Phi:\ell_p^2\to B(\mathcal \ell_2),$ defined by $\Phi((z_1,z_2)):=z_1T+z_2S$ be an isometry. Then, we have the following.
\begin{enumerate}
	\item[(i)] $\|T\|_{\ell_2\to\ell_2}=\|S\|_{\ell_2\to\ell_2}=1.$
	\item[(ii)] The linear map $\Phi_{new}:\ell_p^2\to B(\ell_2 \oplus_2\ell_2)$ defined by $\Phi_{new}((z_1,z_2)):=z_1T_{new}+z_2S_{new},$ where $T_{new}(\zeta_1\oplus\zeta_2):=T\zeta_2\oplus T^*\zeta_1$ and $S_{new}(\zeta_1\oplus\zeta_2):=S\zeta_2\oplus S^*\zeta_1$ are again isometries.
	\item[(iii)] For all $1\leq p<\infty,$ $T\perp_{BJ}S$ and $S\perp_{BJ}T.$
	\item[(iv)] For $p=1,$ $T||S$.
\end{enumerate}
	\end{lem}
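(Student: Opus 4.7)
The four parts of the lemma each have short, essentially self-contained arguments stemming directly from the isometry identity $\|z_1 T + z_2 S\|_{op} = (|z_1|^p + |z_2|^p)^{1/p}$, so my plan is to deal with them in the order presented, noting where the one genuinely computational step lies.

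For $(i)$, I would simply evaluate the isometry relation at the canonical basis vectors $(1,0)$ and $(0,1)$ to immediately conclude $\|T\|_{op}=\|S\|_{op}=1$. Parts $(iii)$ and $(iv)$ are equally direct consequences of the isometry identity: for any $z\in\mathbb{C}$,
\[
\|T+zS\|_{op} = (1+|z|^p)^{1/p} \ge 1 = \|T\|_{op},
\]
which by definition gives $T\perp_{BJ}S$, and the symmetric substitution yields $S\perp_{BJ}T$. For $(iv)$, evaluating at $(z_1,z_2)=(1,1)\in\ell_1^2$ gives $\|T+S\|_{op}=2=\|T\|_{op}+\|S\|_{op}$, which is precisely norm parallelism of $T$ and $S$ with the choice $z=1\in\mathbb{T}$.

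The only step requiring actual computation is $(ii)$. The plan is to recognize that $T_{new}$ and $S_{new}$ are off-diagonal block operators on $\ell_2\oplus_2\ell_2$, namely
\[
T_{new}=\begin{pmatrix}0 & T\\ T^* & 0\end{pmatrix},\qquad S_{new}=\begin{pmatrix}0 & S\\ S^* & 0\end{pmatrix},
\]
so that
\[
M:=z_1 T_{new}+z_2 S_{new}=\begin{pmatrix}0 & A\\ B^* & 0\end{pmatrix},
\]
where $A:=z_1 T+z_2 S$ and $B:=\bar z_1 T+\bar z_2 S$. Applying the $C^*$-identity,
\[
M^*M=\begin{pmatrix}BB^* & 0\\ 0 & A^*A\end{pmatrix},
\]
so $\|M\|_{op}^2=\max(\|A\|_{op}^2,\|B\|_{op}^2)$. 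The crucial observation is that the isometry norm expression $(|z_1|^p+|z_2|^p)^{1/p}$ depends only on the moduli of $z_1,z_2$, hence $\|A\|_{op}=\|B\|_{op}=(|z_1|^p+|z_2|^p)^{1/p}$, which yields $\|M\|_{op}=(|z_1|^p+|z_2|^p)^{1/p}$ and establishes that $\Phi_{new}$ is an isometry.

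I do not foresee any serious obstacle; the single slightly delicate point is keeping track of the complex conjugates in the off-diagonal block and noticing that isometry of $\Phi$ gives $\|A\|_{op}=\|B\|_{op}$ for free. This is what makes the symmetrization $T\mapsto T_{new}$ useful later: it manufactures an isometric embedding into the compact (indeed self-adjoint) off-diagonal operators on $\ell_2\oplus_2\ell_2$, which is the form required to feed into \Cref{ort} and \Cref{para} in the subsequent infinite-descent arguments.
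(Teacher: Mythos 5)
Your proposal is correct and follows essentially the same route as the paper: parts (i), (iii), (iv) by direct evaluation of the isometry identity, and part (ii) by observing that the off-diagonal block operator has norm $\max(\|z_1T+z_2S\|,\|\overline{z_1}T^*+\overline{z_2}S^*\|)$, which equals $\|(z_1,z_2)\|_p$ since the $\ell_p$-norm depends only on moduli. Your explicit $C^*$-identity computation just spells out the one-line justification the paper gives for (ii).
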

\begin{proof}\begin{enumerate}
		\item[(i)] Putting $z_1=1$ and $z_2=0,$ we see that $\|T\|_{\ell_2\to\ell_2}=1.$ Similarly, $\|S\|_{\ell_2\to\ell_2}=1.$ 
		\item[(ii)] Just use that for all $z_1,z_2\in\mathbb C,$ $\|\overline{z_1}T^*+\overline{z_2}S^*\|_{\ell_2\to\ell_2}=\|z_1T+z_2S\|_{\ell_2\to\ell_2}=\|(z_1,z_2)\|_p.$
		\item[(iii)] Note that $\|T+zS\|_{\ell_2\to\ell_2}=\|(1,z)\|_p\geq 1=\|T\|_{\ell_2\to\ell_2}.$ Thus $T\perp_{BJ}S.$ Similarly, $S\perp_{BJ}T.$
		\item[(iv)] Note that for $z\in\mathbb T,$ $\|T+zS\|_{\ell_2\to\ell_2}=2=\|T\|_{\ell_2\to\ell_2}+\|S\|_{\ell_2\to\ell_2}$.
	\end{enumerate}
This completes the proof of the lemma.
\end{proof}

\begin{lem}\label{anesti}
	Let $c>0$ and $2<p<\infty.$ Then, there exists $\alpha>0$ such that we have \[(t^2+c^2)^p\geq t^{2p}+\alpha t^{2p-2}\] for all $t\geq 0.$ 
\end{lem}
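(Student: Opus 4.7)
The plan is to reduce this to the standard tangent line inequality for a convex function. Since $2 < p < \infty$, the function $\phi(x) = x^p$ is $C^1$ and convex on $[0,\infty)$, so its graph lies above every tangent line:
\[
\phi(x) \;\geq\; \phi(a) + \phi'(a)(x-a) \qquad \text{for all } x, a \geq 0.
\]
Applying this with $a = t^2$ and $x = t^2 + c^2$ yields immediately
\[
(t^2 + c^2)^p \;\geq\; (t^2)^p + p\,(t^2)^{p-1}\,c^2 \;=\; t^{2p} + p\,c^2\, t^{2p-2},
\]
so the choice $\alpha = p\,c^2$ does the job.

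There is really no obstacle: the only step worth noting is that the tangent line inequality is applied at the point $a = t^2$ rather than at $0$ (which would be degenerate, since $\phi'(0) = 0$ for $p > 1$ and would produce the trivial bound $(t^2+c^2)^p \geq t^{2p}$). Note moreover that the constant $\alpha = pc^2$ is essentially optimal, since expanding $(t^2+c^2)^p = t^{2p}(1 + c^2/t^2)^p$ as $t \to \infty$ shows that $((t^2+c^2)^p - t^{2p})/t^{2p-2} \to pc^2$, so no value $\alpha > pc^2$ could work for all large $t$.
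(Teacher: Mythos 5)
Your proof is correct, and it takes a genuinely different and cleaner route than the paper's. The paper proves the lemma by brute-force calculus: it sets $f(t)=(t^2+c^2)^p-t^{2p}-\alpha t^{2p-2}$, computes $f'$, bounds it below by $(2pc^2-(2p-2)\alpha)t^{2p-3}$, and concludes that $f$ is nondecreasing with $f(0)>0$ once $\alpha<\frac{pc^2}{p-1}$. You instead invoke the tangent-line (first-order convexity) inequality for $x\mapsto x^p$ at $a=t^2$, which gives the conclusion in one line with the constant $\alpha=pc^2$. Your approach buys three things: it is shorter and avoids the differentiation bookkeeping (the paper's displayed derivative even contains a typo, $2pt^{p-1}$ for $2pt^{2p-1}$); it yields a strictly larger, and as you observe asymptotically optimal, constant, since $\frac{pc^2}{p-1}<pc^2$ for $p>2$; and it works verbatim for all $p>1$, not just $p>2$ (though only the range $p>2$ is used downstream, in the proof of Theorem 1.2, where the term $\alpha t^{2p-2}$ must dominate $2t^p$ as $t\to\infty$). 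The one point to check --- that the tangent-line inequality at $a=t^2$ is legitimate for every $t\geq 0$, including $t=0$ where $\phi'(0)=0$ --- you address explicitly, and the $t=0$ case of the stated inequality is trivially $(c^2)^p\geq 0$ anyway. No gaps.
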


\begin{proof}Put $p_0=\frac{p}{2}.$ Define the following function
	\[f(t):=(t^2+c^2)^p-t^{2p}-\alpha t^{2p-2}=(t^4+2t^2c^2+c^4)^{p_0}-t^{2p}-\alpha t^{2p-2}\] for $t>0,$ where $\alpha>0$ to be chosen later. By differentiating $f$ we have that \begin{equation}\label{peq1}
	\begin{split}
	f^{\prime}(t)&=p_0(t^4+2t^2c^2+c^4)^{p_0-1}(4t^3+4tc^2)-2pt^{2p-1}-(2p-2)\alpha t^{2p-3}\\
	&= 2tp(t^2+c^2)(t^2+c^2)^{p-2}-2pt^{p-1}-(2p-2)\alpha t^{2p-3}.
	\end{split}
	\end{equation} Note that we have the following inequality.
	\begin{equation}\label{peq2}
	\begin{split}
	2tp(t^2+c^2)(t^2+c^2)^{p-2}-2pt^{p-1}-(2p-2)\alpha t^{2p-3}&\\ \geq 2p(t^2+c^2)t^{2p-3}-2pt^{p-1}-(2p-2)\alpha t^{2p-3}&\\
	=(2pc^{2}-(2p-2)\alpha)t^{2p-3}.
	\end{split}
	\end{equation}In the first inequality above we have used that $t^2+c^2\geq t^2,$ for $t\geq 0.$ 

	Now, we choose $\alpha>0$ but small enough, such that $2pc^2-(2p-2)\alpha>0.$ Thus, in view of equation \eqref{peq1} and equation \eqref{peq2} have that $f^{\prime}(t)\geq 0$ for all $t>0$ and $f(0)>0.$ This completes the proof the lemma.
\end{proof}
\section{Proof of the theorems}\label{sec3}
\begin{proof}[Proof of Theorem \ref{noc}]Suppose not. Then there exists linear operators $T,S\in S_\infty$ such that \[\|z_1T+z_2S\|_\infty=|z_1|+|z_2|,\] for all $z_1,z_2\in\mathbb C.$ By $(ii)$ of Lemma \ref{agag}, first we can assume that $T$ and $S$ are self-adjoint. Observe that for any unitary operator $U:\ell_2\to\ell_2,$ we have 
	\[\|z_1T+z_2S\|_\infty=\|z_1UTU^*+z_2USU^*\|_\infty.\]
We have by Part $(i)$ of \Cref{agag}, $\|T\|_\infty=\|S\|_\infty=1.$ Therefore, we can further assume that $T$ is a diagonal matrix with all eigenvalues real, and if necessary applying again unitary transformation, we can further assume that $T$ has the following form. For some $k,l\in\mathbb N\cup\{0\}$ with $k+l\geq 1,$ $Te_i=e_i,$ $1\leq i\leq k$, $Te_i=-e_i$ for $k+1\leq i\leq k+l$ and $Te_j=\alpha_je_j,$ $|\alpha_j|<1$ for $k+l< j.$ The set $\{e_i:i\geq 1\}$ denotes the standard basis of $\ell_2.$ The case $k=0$ or $l=0$ corresponds to the cases that $T$ does not have $1$ or $-1$ respectively in the diagonal.

	Define the unitary operator $V:\ell_2\to\ell_2$ as $Ve_i=\text{sign}(\langle Te_i,e_i\rangle)e_i$ if $\langle Te_i,e_i\rangle\neq 0$ and $Ve_i=e_i$ if $Te_i=0$, $1\leq i<\infty.$ Now, note that $\|z_1TV+z_2SV\|_\infty=\|z_1T+z_2S\|_\infty,$ for all $z_1,z_2\in\mathbb C$ as $V$ is unitary. Therefore, if necessary again performing a unitary transformation, we may further assume that $T$ is a positive semi-definite diagonal matrix of the form $Te_i=e_i$ for $1\leq i\leq n$ and $Te_i=\alpha_{i}e_i$ for $i\geq n+1$ with $\alpha_i<1$ for all $i\geq n+1.$
	Clearly,
	$\mathbb{M}_T=\{\zeta\in\text{span}\{e_1,\dots,e_{n}\}:\|\zeta\|_2=1\}.$

	Using $(iv)$ of Lemma \ref{agag} and Theorem \ref{para}, we see that there exists $\zeta\in\mathbb{M}_T\cap \mathbb{M}_S$ such that $|\langle T\zeta, S\zeta\rangle|=\|T\|_\infty\|S\|_\infty.$ We readily have that for some nonzero $\alpha\in\mathbb C,$ $S\zeta=\alpha\zeta.$ Furthermore, taking norm and using the fact that $\zeta\in\mathbb M_S,$ we have $|\alpha|=1.$ Thus, $\alpha$ is a unimodulus eigenvalue of $S$.
	
	Consider the map $\ell_1^2\mapsto S_\infty,$ defined as $(z_1,z_2)\mapsto z_1T+z_2(\overline{\alpha} S).$ This is again an isometry which is justified as follows.
	\[\|z_1T+z_2\overline{\alpha} S\|_\infty=\|(z_1,\overline{\alpha} z_2)\|_1=\|(z_1,z_2)\|_1.\] In above we have used the fact that $|\alpha|=1.$
	
	Therefore, by replacing $S$ by $\overline{\alpha} S,$ we may assume that $S\zeta=\zeta.$ Thus, $\zeta$ is a common unit eigenvector of $T$ and $S$ with common eigenvalue $1.$
	
	Now, we choose an orthonormal basis $\{\eta_1,\eta_2\dots\}$ of $\ell_2$ as the following:

Let us fix $\eta_1=\zeta.$ By performing, Gram-Schimdt process, we choose $\eta_2,\dots,\eta_n$ in the subspace $\text{span}\{e_1,\dots,e_{n}\}$ such that $\{\eta_1,\dots,\eta_n\}$ is an orthonormal basis of $\text{span}\{e_1,\dots,e_{n}\}$. Now, we set $\eta_{i}=e_{i}$ for $i\geq n+1.$ It is clear that $\{\eta_1,\eta_2\dots\}$ is an orthonormal basis of $\ell_2.$
	
	Let us define $W:\ell_2\to\ell_2$ as $We_i=\eta_i$ for $i\geq 1.$ Thus, $W$ is a unitary operator and we have that 
	\[W^*TW=T\] and first column of $W^*SW$ is $(1,0,0,\dots).$ Denote $W^*TW=T_1$ and $W^*SW=S_1.$ Note that by taking adjoint \begin{equation}\label{bar}	\|\overline{z_1}T_1^*+\overline{z_2}S_1^*\|_\infty=\|z_1T_1+z_2S_1\|_\infty=\|(z_1,z_2)\|_1.
		\end{equation}
	Replacing $(z_1,z_2)$ by $(\overline{z_1},\overline{z_2})$ in equation \eqref{bar}, we obtain that the map $(z_1,z_2)\mapsto z_1 T_1+z_2S_1^*$ again induces an isometry from $\ell_1^2$ to $S_\infty.$ Note that the first row of $S_1^*$ is $(1,0,0,\dots).$ Now, by Part $(i)$ of Lemma \ref{agag}, we have $\|S_1^*\|_\infty=1.$ We denote $v:=S_1^*e_1.$ We must have $\|v\|_2\leq 1.$ Note that if we denote $(s_{ij})_{ i,j\geq 1}$ to be the matrix representation in the standard basis of $S_1^*$, then $v=(1,s_{21},s_{31},\dots).$ Therefore, as $\|v\|_2\leq 1$ we must have $s_{i1}=0$ for all $i\geq 2.$ Thus, the first column of $S_1^*$ is $(1,0,0,\dots)^t.$ We denote the operators $T_2$ and $S_2$ by eliminating first row and first column in the matrix representation of $T_1$ and $S_1^*$ respectively.
	Therefore, we obtain that
	\begin{equation}\label{eq1}
	|z_1|+|z_2|=\|z_1T_1+z_2S_1^*\|_\infty=\max\{|z_1+z_2|,\ \|z_1T_2+z_2S_2\|_\infty\},\  \text{for all} ,\ z_1,z_2\in\mathbb C.
	\end{equation}
	From equation \eqref{eq1}, we obtain that for all $z_1,z_2\in\mathbb C$ with $|z_1+z_2|<|z_1|+|z_2|$ we have \[\|z_1T_2+z_2S_2\|_\infty=|z_1|+|z_2|.\]
	By density and using continuity of norm, we obtain from above that the map defined by $(z_1,z_2)\mapsto z_1T_2+z_2S_2$ again defines an isometric embedding of $\ell_1^2$ into $S_\infty.$ We notice that $T_1$ is again a positive semi-definite diagonal matrix with $n-1$ number of $1$'s in the diagonal. Now, after finitely many iteration we will finally obtain an isometric embedding
	\[(z_1,z_2)\mapsto z_1\widetilde{T}+z_2\widetilde{S}\] where $\widetilde{T}$ is a positive semi-definite diagonal matrix with all diagonal elements strictly less than $1.$ But by Part $(i)$ of Lemma \ref{agag}, we have $\|\widetilde{T}\|_\infty=1,$ which is contradiction. This completes the proof of the theorem.
	\end{proof}

\begin{rem}
It is well known that $\ell_1^2(\mathbb R)$ is isometrically isomorphic to $\ell_\infty^2(\mathbb R).$ However, in complex case, $\ell_1^2$ is not isometrically isomorphic to $\ell_\infty^2.$ We have an isometric embedding of $\ell_1^2(\mathbb R)$ to $M_2(\mathbb R)$ defined as $(x_1,x_2)\mapsto x_1T+x_2S,$ where $T:=I_2,$ the identity matrix and $S:=e_{11}-e_{22}$ where $\{e_{ij}:1\leq i,j\leq 2\}$ is the standard basis of $M_2(\mathbb R).$ Therefore, it is reasonable to ask what makes the proof of Theorem \ref{noc} work? Interestingly, the reason for which the proof works for complex case, is due to the elementary fact that $\{(z_1,z_2):|z_1+z_2|<|z_1|+|z_2|\}$ is dense in $\mathbb C^2,$ which is not true for real case and this is the only point where the proof actually differs for the complex case. 
\end{rem}
	\begin{proof}[Proof of \Cref{forp}] Suppose not. Let $T$ be compact. As in the beginning of the proof of Theorem \ref{noc}, we may further assume that $T$ is a positive semi-definite diagonal matrix of the form $Te_i=e_i$ for $1\leq i\leq n$ and $Te_i=\alpha_{i}e_i$ for $i\geq n+1$ with $\alpha_i<1$ for all $i\geq n+1.$
		Clearly, we again have
		$\mathbb{M}_T=\{\zeta\in\text{span}\{e_1,\dots,e_{n}\}: \|\zeta\|_2=1\}.$

		By Part $(iii)$ of Lemma \ref{agag} and Theorem \ref{ort}, we obtain a vector $\zeta\in \mathbb{M}_T$ such that $\langle T\zeta, S\zeta\rangle=0.$ Note that for all $z_1,z_2\in\mathbb C,$ we have 
		\begin{equation}\label{eq2}
		(|z_1|^p+|z_2|^p)^{\frac{1}{p}}=\|z_1T+z_2S\|_{\ell_2\to\ell_2}\geq \|z_1T\zeta+z_2S\zeta\|_2=(|z_1|^2+c^2|z_2|^2)^{\frac{1}{2}},
		\end{equation} where $c:=\|S\zeta\|_2.$ We have the following cases.
		
		\textbf{Case 1} $c>0.$

		From equation \eqref{eq2}, using homogenity and Lemma \ref{anesti}, we have that for all $t> 0,$ and some constant $\alpha>0,$
		\[(t^p+1)^2\geq (t^2+c^2)^p\geq t^{2p}+\alpha t^{2p-2}.\] Therefore, \[2+\frac{1}{t^p}\geq \alpha t^{p-2}.\] Letting $t\to\infty,$ we get a contradiction.
	
	\textbf{Case 2} $c=0.$
	
	From the assumption, we obtain that $\|T\zeta\|_2=1=\|\zeta\|_2.$ This shows that $\zeta$ is a unit eigenvector of $T$ with eigenvalue $1.$ Also, $S\zeta=0.$ Hence, $\zeta$ is an eigenvector of $S$ with eigenvalue $0.$ Now, we choose an orthonormal basis $\{\eta_1,\eta_2\dots\}$ of $\ell_2$ as the following:
	
	Choose $\eta_1=\zeta.$ By Gram-Schimdt process we choose $\eta_2,\dots,\eta_n$ in the subspace $\text{span}\{e_1,\dots,e_{n}\}$ such that $\{\eta_1,\dots,\eta_n\}$ is an orthonormal basis of $\text{span}\{e_1,\dots,e_{n}\}$. Now we set $\eta_{i}=e_{i}$ for $i\geq n+1.$ It is clear that $\{\eta_1,\eta_2\dots\}$ is an orthonormal basis of $\ell_2.$
	
	Denote $W:\ell_2\to\ell_2$ defined as $We_i=\eta_i$ for $i\geq 1.$ Thus $W$ is a unitary operator on $\ell_2$ and we have that 
	\[W^*TW=T.\] Denote $W^*TW=T_1$ and $W^*SW=S_1.$

	Note that $(z_1,z_2)\mapsto z_1T_1+z_2S_1$ induces an isometry between $\ell_p^2$ to $B(\ell_2).$ Using matrix representation with respect to the standard basis we see that the first column of $S_1$ is entirely $0$ (since $S_1e_1=0.$) Now, consider a new isometric embedding  from $\ell_p^2$ to $B(\ell_2)$ defined by $(z_1,z_2)\mapsto z_1T_1^*+z_2S_1^*.$ The fact it is again an isometric embedding can be verified as follows 
	\[\|z_1T_1^*+z_2S_1^*\|_{\ell_2\to\ell_2}=\|\overline{z_1}T_1+\overline{z_2}S_1\|_{\ell_2\to\ell_2}=\|(\overline{z_1},\overline{z_2})\|_p=\|(z_1,z_2)\|_p.
	\]
	Now, $T_1^*$ is actually $T$ itself. Note that, now all the terms in the first row of $S_1^*$ becomes $0.$ Again $e_1$ is in $\mathbb M_T$ and $Te_1=e_1$ and $\langle Te_1,S_1^*e_1\rangle=0,$ since the first row of $S_1^*$ is zero. If $\|S_1^*e_1\|_2\neq 0,$ using Case 1, we shall obtain a contradiction. Therefore, we are forced to assume that $S_1^*e_1=0.$ Therefore, we obtain that the first column of $S_1^*$ is entirely zero.

	Denote $T_2:=T$ and $S_2:=S_1^*.$ Denote $T_3$ and $S_3$ to be operators by eliminating the first row and first column in the matrix representations of $T_2$ and $S_2$ respectively.
	Therefore, from the fact that $(z_1,z_2)\mapsto z_1T_2+z_2S_2$ is an isometric embedding from $\ell_p^2$ to $B(\ell_2)$, we have the identity
	\begin{equation}\label{eq3}
	(|z_1|^p+|z_2|^p)^{\frac{1}{p}}=\max\{|z_1|,\|z_1T_3+z_2S_3\|_{\ell_2\to\ell_2}\},\ \text{for all}\ z_1,z_2\in\mathbb C.
	\end{equation}
	Note that we always have for $z_2\neq 0,$ $\|(z_1,z_2)\|_p>|z_1|.$ Therefore, from equation \eqref{eq3}, we obtain 
	\begin{equation}\label{eq4}
	\|z_1T_3+z_2S_3\|_{\ell_2\to\ell_2}=(|z_1|^p+|z_2|^p)^{\frac{1}{p}}, \ \text{for all},\ z_2\neq 0.
	\end{equation}
	But by continuity of norm, we have from equation \eqref{eq4} \[\|z_1T_3+z_2S_3\|_{\ell_2\to\ell_2}=(|z_1|^p+|z_2|^p)^{\frac{1}{p}},\ \text{for all},\ z_1,z_2\in\mathbb C.\] Therefore, we obtain a new isometric embedding $(z_1,z_2)\mapsto z_1T_3+z_2S_3$ from $\ell_p^2$ to $B(\ell_2)$ where $T_3$ is again a positive semi-definite diagonal matrix but now there are $n-1$ number of $1$'s in the diagonal. Now, we can argue as in the proof of Theorem \ref{noc} to obtain a contradiction. This completes the proof of the theorem.
\end{proof}	
\begin{rem}
The proof of \Cref{forp} indicates that \Cref{forp} holds true for any two dimensional complex Banach space $(\mathbb C^2,\|.\|)$ with the following properties.
\begin{enumerate}
	\item For all $(z_1,z_2)\in\mathbb C^2,$ $\|(z_1,z_2)\|=\|(\overline{z_1},\overline{z_2})\|.$
	\item For all $0<c\leq 1,$ the operator $(z_1,z_2)\mapsto (z_1,cz_2)$ is not a contraction from $(\mathbb C^2,\|.\|)$ to $\ell_2^2.$
	\item $\{(z_1,z_2)\in\mathbb C^2:\|(z_1,z_2)\|>|z_1|\}$ is dense in $\mathbb C^2.$
	\item $\|(1,0)\|=\|(0,1)\|=1.$
	\item $\|(1,z)\|\geq 1$ and $\|(w,1)\|\geq 1$ for all $z,w\in\mathbb C.$
\end{enumerate}
Note that for all $2<p,q<\infty,$ the Banach spaces $(\mathbb C^2,\|.\|_{B_{p,q}})$ satisfies above properties since for any $2<p,q<\infty$, we can find $2<p_1,p_2<\infty$ such that $\|.\|_{\ell_{p_1}}\leq\|.\|_{B_{p,q}}\leq\|.\|_{\ell_{p_2}}.$ Hence, there is no isometric embedding of  $(\mathbb C^2,\|.\|_{B_{p,q}})$ into the set of all compact operators for $2<p,q<\infty.$ However, $(\mathbb C^2,\|.\|_{B_{1,2}})$ embeds isometrically into $S_\infty^2$ via the embedding \[(z_1,z_2)\mapsto z_1I_2+z_2e_{12}.\] We conjecture that  $(\mathbb C^2,\|.\|_{B_{p,q}})$ embeds isometrically into $S_\infty$ if and only if $(p,q)=(2,2),(1,2)$ or $(2,1).$
\end{rem}
\begin{proof}[Proof of \Cref{ncprecha}]If $S_1^m$ embeds isometrically into $S_p$, for some $m\geq 2,$ we can observe $\ell_1^2$ embeds into $S_p$ isometrically. Let $(z_1,z_2)\mapsto z_1T+z_2S$ be an isometry from $\ell_1^2$ into $S_p.$ Clearly, we have $T||S.$ Therefore, by Theorem \ref{FM1}, $T$ and $S$ are linearly dependent, which is absurd.
This completes the proof of the theorem.
\end{proof}
From now on, we shall always assume that $\Omega$ is a Reinhardt domain in $\mathbb{C}^2$ which is a unit ball with respect to some norm.
\begin{proof}[Proof of \Cref{auoss}]
	We first prove the following general fact. Suppose $\Omega$ is a unit ball with respect to some norm in $\mathbb{C}^2$, then $(\mathbb{C}^2,\|\cdot\|_{\Omega})$ has Property P if and only if $(\mathbb{R}^2,\|\cdot\|_{|\Omega|})$ has Property P.

		First note for a complex matrix $A=(a_{ij})_{i,j=1}^2\geq 0$ and $A:(\mathbb{C}^2,\|\cdot\|_{\Omega})\mapsto(\mathbb{C}^2,\|\cdot\|_{\Omega})^*$, we have that $\|A\|_{op}=\sup_{(z_1,z_2)\in\Omega}\sum_{i,j=1}^2a_{ij}z_i\bar{z}_{j}.$ We give a quick proof.
	Observe that, using duality, we have $\|A\|_{op}=\sup_{v\in\Omega,w\in\Omega}|\langle Av,w\rangle|.$
		By using the fact that if $A\geq 0$, we can always find a positive square root, say $B$ of $A$, we obtain the following
		\begin{align}\label{f}
		\sup_{v\in\Omega,w\in\Omega}|\langle Av,w\rangle|=\sup_{v\in\Omega,w\in\Omega}\langle Bv,Bw\rangle
		\leq\sup_{v\in\Omega,w\in\Omega}\|Bv\|_2\|Bw\|_2
		=\sup_{v\in\Omega}\|Bv\|_2^2.\nonumber
		\end{align}
		To complete the proof, we notice that one can always take $v=w$ in above.
	
		Observe that since $(\mathbb{C}^2,\|\cdot\|_{\Omega})$ is a Banach space with the Reinhardt unit ball $\Omega,$ the unit ball of $(\mathbb{C}^2,\|\cdot\|_{\Omega})^*$ is again Reinhardt. Also, by using Reinhardt property of the unit ball, we have $\|(z_1,z_2)\|_{\Omega}=\|(|z_1|,|z_2|)\|_{|\Omega|}\ , z_1,z_2\in\Omega.$	Therefore, for any complex matrix $A\geq 0$ we have the following equality \[ \|A\|_{((\mathbb{C}^2,\|\cdot\|_{\Omega})\rightarrow(\mathbb{C}^2,\|\cdot\|_{\Omega})^*)}=\|\mathbf A^{+}\|_{((\mathbb{C}^2,\|\cdot\|_{\Omega})\rightarrow(\mathbb{C}^2,\|\cdot\|_{\Omega})^*)}. \] The same is true for the real case. For $A\geq 0$ real matrix
		\[ \|A\|_{((\mathbb{R}^2,\|\cdot\|_{|\Omega|})\rightarrow(\mathbb{R}^2,\|\cdot\|_{|\Omega|})^*)}=\|\mathbf A^{+}\|_{((\mathbb{R}^2,\|\cdot\|_{|\Omega|})\rightarrow(\mathbb{R}^2,\|\cdot\|_{|\Omega|})^*)}. \]

	Thus, we observe for any complex matrix $A\geq 0$, we have \[ \|\mathbf A^{+}\|_{((\mathbb{C}^2,\|\cdot\|_{\Omega})\rightarrow(\mathbb{C}^2,\|\cdot\|_{\Omega})^*)}=\|\mathbf A^{+}\|_{((\mathbb{R}^2,\|\cdot\|_{|\Omega|})\rightarrow(\mathbb{R}^2,\|\cdot\|_{|\Omega|})^*)}. \]
All these above statements are true, because one can see
		\begin{align*}
		\sup_{(z_1,z_2)\in\Omega}(a_{11}|z_1|^2+2\text{Re}(a_{12}z_1\overline{z_2})+a_{22}|z_2|^2)
		&=\sup_{(|z_1|,|z_2|)\in\Omega}(a_{11}|z_1|^2+2|a_{12}||z_1||z_2|+a_{22}|z_2|^2)\\
		&=\sup_{\|(x,y)\|_{|\Omega|}\leq 1}(a_{11}x^2+2|a_{12}||x||y|+a_{22}y^2)\\
		&=\sup_{\|(x,y)\|_{|\Omega|}\leq 1}(a_{11}x^2+2|a_{12}|xy+a_{22}y^2).
	\end{align*}
		To this end let us observe
		\begin{align*}
		& =  \sup\{\langle A,B\rangle :\|A
		\|_{((\mathbb{C}^2,\|\cdot\|_{\Omega})\rightarrow(\mathbb{C}^2,\|\cdot\|_{\Omega})^*)}\leq 1,\|B\|_{((\mathbb{C}^2,\|\cdot\|_{\Omega}^*)\rightarrow(\mathbb{C}^2,\|\cdot\|_{\Omega}))}\leq 1,A\geq 0,B\geq 0\}\\
		&\leq\sup\{\langle \mathbf A^{+},\mathbf B^{+}\rangle :\|A
		\|_{((\mathbb{C}^2,\|\cdot\|_{\Omega})\rightarrow(\mathbb{C}^2,\|\cdot\|_{\Omega})^*)}\leq 1,\|B\|_{((\mathbb{C}^2,\|\|_{\Omega}^*)\rightarrow(\mathbb{C}^2,\|\cdot\|_{\Omega}))}\leq 1,A\geq 0,B\geq 0\}.
		\end{align*}
		The last inequality is just the triangle inequality and it is clear that it is actually an equality. By a very similar argument as above, we also have 
			\begin{align*}
		& \sup\{\langle A,B\rangle :\|A
		\|_{((\mathbb{R}^2,\|\cdot\|_{\Omega})\rightarrow(\mathbb{R}^2,\|\cdot\|_{\Omega})^*)}\leq 1,\|B\|_{((\mathbb{R}^2,\|\cdot\|_{\Omega}^*)\rightarrow(\mathbb{R}^2,\|\cdot\|_{\Omega}))}\leq 1,A\geq 0,B\geq 0\}\\
		&\leq\sup\{\langle \mathbf A^{+},\mathbf B^{+}\rangle :\|A
		\|_{((\mathbb{R}^2,\|\cdot\|_{\Omega})\rightarrow(\mathbb{R}^2,\|\cdot\|_{\Omega})^*)}\leq 1,\|B\|_{((\mathbb{R}^2,\|\|_{\Omega}^*)\rightarrow(\mathbb{R}^2,\|\cdot\|_{\Omega}))}\leq 1,A\geq 0,B\geq 0\}.
		\end{align*}
	One readily sees that $\gamma((\mathbb{R}^2,\|\cdot\|_{|\Omega|}))=\gamma((\mathbb{C}^2,\|\cdot\|_{\Omega}))$.

	Now, by \Cref{twoo}, one can easily observe that if $\Omega$ is of the form $\{(z_1,z_2):|z_1|^p+|z_2|^q<1\}$, where $p$ and $q$ are real numbers bigger than or equal to one and at least one of them is strictly bigger than one, then $(\mathbb{C}^2,\|\cdot\|_{\Omega})$ cannot have Property P, as $\overline{|\Omega|}=\{(x,y):|x|^p+|y|^q\leq 1,x,y\in\mathbb{R}\}$ has more than four extreme points but the closed unit ball of the real Banach space $\ell_\infty^2(\mathbb R)$ has exactly four extreme points. 
	This completes the proof of the theorem.
	\end{proof}
\begin{rem} 
It is easy to see that Property P is preserved under duality. Combining this and the characterization of Thullen for Reinhardt domains in $\mathbb{C}^2$, we get a very large class of two dimensional complex Banach spaces, which can be endowed with different operator space structures. 
	\end{rem}

\textbf{Acknowledgement:} The author is very grateful to Guixiang Hong, Gadadhar Misra and Md. Ramiz Reza and  for many helps related to this current work. The author acknowledges Key-subsidy postdoctoral fellowship supported by Wuhan University.


\begin{thebibliography}{}
		
			\bibitem[AR75]{AR75}Arazy, J. \emph{The isometries of $C_p.$} Israel J. Math. 22 (1975), no. 3--4, 247--256. 
			
		\bibitem[ARFJS95]{ARFJS95} Arias, A.; Figiel, T.; Johnson, W. B.; Schechtman, G. \emph{Banach spaces with the $2$-summing property}. Trans. Amer. Math. Soc. 347 (1995), no. 10, 3835--3857.
		
			\bibitem[BAM95]{BAM95}Bagchi, B. ; Misra, G. \emph{Contractive homomorphisms and tensor product norms.} Integral Equations Operator Theory 21 (1995), no. 3, 255--269.
			
				\bibitem[BA32]{BA32} Banach, S.\emph{ Th{\'e}orie des op{\'e}rations lin{\'e}aires.} (French) [Theory of linear operators] Reprint of the 1932 original. \`Editions Jacques Gabay, Sceaux, 1993. iv+128 pp.
			
			\bibitem[BHS99]{BHS99}Bhatia, R.; Semrl, P. \emph{Orthogonality of matrices and some distance problems.} Special issue celebrating the 60th birthday of Ludwig Elsner. Linear Algebra Appl. 287 (1999), no. 1--3, 77--85. 
			
			\bibitem[BI35]{BI35}Birkhoff, G. \emph{Orthogonality in linear metric spaces.} Duke Math. J. 1 (1935), no. 2, 169--172. 
			
			\bibitem[BOCMWZ17]{BOCMWZ17} Bottazzi, T.; Conde, C.; Moslehian, M. S.; W\'ojcik, P.; Zamani, A. \emph{Orthogonality and parallelism of operators on various Banach spaces.} J. Aust. Math. Soc. 106 (2019), no. 2, 160--183.
			
		
	\bibitem[EN98]{EN98}Engel, A. \emph{Problem-solving strategies.} Problem Books in Mathematics. Springer-Verlag, New York, 1998. x+403 pp.
		
		\bibitem[FLJ03]{FLJ03}Fleming, R. J.; James E. J. \emph{Isometries on Banach spaces: function spaces.} Chapman  Hall/CRC, 2003.
		
		\bibitem[FLJ07]{FLJ07}Fleming, R. J.; James E. J. \emph{Isometries in Banach Spaces: Vector-valued Function Spaces and Operator Spaces,} Volume Two. Chapman and Hall/CRC, 2007.
		
		\bibitem[GUR18]{GUR18}Gupta, R.; Reza, R, M. \emph{Operator Space Structures on $\ell_1(n)$.} Houston J. Math. 44 (2018), no. 4, 1205--1212.
		
		
		\bibitem[JA45]{JA45} James, R. C. \emph{Orthogonality in normed linear spaces.} Duke Math. J. 12 (1945), 291--302. 
		
		 \bibitem[JA47]{JA47}James, R.C. \emph{Orthogonality and linear functionals in normed linear spaces.} Trans. Amer. Math. Soc. 61 (1947), 265--292.
		 
		\bibitem[JOL01]{JOL01}Johnson, W. B.; Lindenstrauss, J. \emph{Handbook of the geometry of Banach spaces.} Vol. 1. Elsevier, 2001.
		
			\bibitem[JURS05]{JURS05}Junge, M.; Ruan, Z; Sherman, D. \emph{A classification for $2$-isometries of noncommutative $L_p$
			-spaces.} Israel J. Math. 150 (2005), 285--314.
		
			\bibitem[LA58]{LA58}Lamperti, J. \emph{On the isometries of certain function-spaces.} Pacific J. Math. 8 1958 459-466.
		
	\bibitem[LY08]{LY08} Lyubich, Y. I. \emph{Upper bound for isometric embeddings $\ell_2^m\to\ell_p^n$
		.} Proc. Amer. Math. Soc. 136 (2008), no. 11, 3953--3956. 
	
	\bibitem[LY09]{LY09} Lyubich, Y. I. Lower bounds for projective designs, cubature formulas and related isometric embeddings. European J. Combin. 30 (2009), no. 4, 841--852.
	
\bibitem[LYV93]{LYV93}Lyubich, Y. I.; Vaserstein, L. N. \emph{Isometric embeddings between classical Banach spaces, cubature formulas, and spherical designs.} Geom. Dedicata 47 (1993), no. 3, 327--362.
 
\bibitem[LYS01]{LYS01} Lyubich, Y. I.; Shatalova, O. A. \emph{Euclidean subspaces of the complex spaces $\ell_p^m$
constructed by orbits of finite subgroups of $SU(m).$} Geom. Dedicata 86 (2001), no. 1-3, 169--178.

		\bibitem[LYS04]{LYS04}Lyubich, Y. I.;Shatalova, O.A. \emph{Isometric embeddings of finite-dimensional $\ell_p$-spaces over the quaternions.}  Algebra i Analiz 16 (2004), no. 1, 15--32; translation in St. Petersburg Math. J. 16 (2005), no. 1, 9--24.
		 
		\bibitem[MI94]{MI90}Misra, G. \emph{Completely contractive Hilbert modules and Parrott's example}. Acta Math. Hungar. 63 (1994), no. 3, 291--303. 
		\bibitem[MIS90]{MIS90}Misra, G.; Sastry, S. \emph{Bounded modules, extremal problems, and a curvature inequality.}  J. Funct. Anal. 88 (1990), no. 1, 118–--134.
		
		\bibitem[MIS190]{MIS190}Misra, G. ; Sastry, S. \emph{Completely bounded modules and associated extremal problems.} Journal of Functional Analysis 91.2 (1990): 213-220.
		
		\bibitem[MIPV19]{MIPV19}  Misra, G.; Pal, A.; Varughese, C. \emph{Contractivity and complete contractivity for finite dimensional Banach spaces.} J. Operator Theory 82 (2019), no. 1, 23--47.
		
		\bibitem[PA70]{PA70} Parrott, S. \emph{Unitary dilations for commuting contractions,}
		Pac. J. Math., 34 (1970), 481--490. 
		
		\bibitem[PA92]{PA92} Paulsen, V. I. \emph{Representations of function algebras, abstract operator spaces, and Banach space geometry.} Journal of functional analysis 109.1 (1992): 113-129.
		
		\bibitem[PA02]{PA02} Paulsen, V. I. \emph{Completely bounded maps and operator algebras}. Vol. 78. Cambridge University Press, 2002.
		
		\bibitem[PI86]{PI86}Pisier, G. \emph{Factorization of linear operators and geometry of Banach spaces.} No. 60. American Mathematical Soc., 1986.
		
		\bibitem[PI03]{PI03}Pisier, G. \emph{Introduction to operator space theory.} V London Mathematical Society Lecture Note Series, 294. Cambridge University Press, Cambridge, 2003. viii+478 pp.
		
	\bibitem[PI13]{PI13}Pisier, G. \emph{Similarity problems and completely bounded maps}. Springer, 2013.
	
	
	\bibitem[TU17]{TU17}Turnsek, A. \emph{A remark on orthogonality and symmetry of operators in $B(H)$}, Linear Algebra
	Appl. 535 (2017), 141-150.
	
	\bibitem[YE81]{YE81}Yeadon, F. J. \emph{Isometries of noncommutative $L_p$-spaces.} Math. Proc. Cambridge Philos. Soc. 90 (1981), no. 1, 41--50. 
		
		
		\bibitem[ZA16]{ZA16} Zamani, A. \emph{The operator-valued parallelism.} Linear Algebra Appl. 505 (2016), 282--295.

		\end{thebibliography}
\end{document}